\def\co{\colon\thinspace}
\numberwithin{equation}{section}
\newtheorem{teorema}{Theorem}[section]  
\newtheorem{lem}[teorema]{Lemma} 
\newtheorem{prop}[teorema]{Proposition}
\theoremstyle{definition}
\newtheorem{defin}[teorema]{Definition}
\theoremstyle{remark} 
\newtheorem{ex}[teorema]{Example}
\newtheorem{oss}[teorema]{Remark}
\newcommand{\C}{\mathbb{C}}
\newcommand{\R}{\mathbb{R}}
\newcommand{\N}{\mathbb{N}}
\newcommand{\Z}{\mathbb{Z}}
\newcommand{\Q}{\mathbb{Q}}
\newcommand{\T}{\mathbb{T}}
\newcommand{\trop}{\textup{trop}}
\newcommand{\B}{\mathbb{B}}
\newcommand{\pk}{K[\![t]\!]}
\newcommand{\pt}{\T[\![t]\!]}
\newcommand{\pb}{\B[\![t]\!]}
\newcommand{\diff}[2]{#1\{x_1, \ldots , x_{#2}\}}
\newcommand{\Sol}{\textup{Sol}}
\newcommand{\supp}{\mathrm{supp}}
\newcommand{\ord}{\mathrm{ord}}
\newcommand{\im}{\mathrm{im}}
\title{The fundamental theorem of tropical differential algebra for formal Puiseux series}
\author{Sebastian Falkensteiner}
\email{Sebastian.Falkensteiner@mis.mpg.de} 
\address{Max Planck Institute for Mathematics in the Sciences, Inselstraße 22, 04103 Leipzig, Germany}
\author{Stefano Mereta}
\email{Stefano.Mereta@mis.mpg.de}
\date{\today}
\subjclass[2010]{Primary 14A20;
	Secondary 12H99, 13N99, 14T90, 14T99}
\keywords{tropical geometry; algebraic differential equations; fundamental theorem; Puiseux series}
\begin{document}
\begin{abstract}
The fundamental theorem of tropical differential algebra has been established for formal power series solutions of systems of algebraic differential equations. It has been shown that the direct extension to formal Puiseux series solutions fails. In this paper, we overcome this issue by transforming the given differential system and such a generalization is presented. 
Moreover, we explain why such transformations do not work for generalizing the fundamental theorem to transseries solutions, but show that one inclusion still holds for this case even without using any transformation.
\end{abstract}
\maketitle

\section{Introduction}

Even though the first appearances of tropical methods in the area of optimization can be traced back to the 1970's, it is with the beginning of the new millennium that these techniques have started to be applied more and more often, and fruitfully, to geometry, eventually developing into an independent and rich new branch of mathematics, with connections spreading as far as computational biology and machine learning. The fundamental notion linking the classical world and the tropical one is that of tropicalization: by means of a valuation, this process moves problems of algebro-geometric nature to tropical ones, the latter having an intrinsic combinatorial and polyhedral nature. A comprehensive text to approach the subject is~\cite{macsturm}.

More recently, following the revival of interest around algebraic methods for differential equations, and differential algebra in general, see e.g.~\cite{robertz2014formal,lange2014counting,falkensteiner2023algebro}, a tropical approach to the study of solutions to set of ODEs have been introduced in~\cite{grig}. In this work the author gives a definition of tropical differential equations and other objects needed to state a fundamental theorem (similar to the fundamental theorem of tropical geometry, see for example \cite[Theorem 3.2.3]{macsturm}) in this context. This theorem was successfully proven in \cite{AGT} and subsequently extended to the case of partial differential equations in \cite{falkfund}, see~\cite{boulier2021relationship} for a more algorithmic description. Drawing inspiration from recent works where tropical geometry is developed as geometry over the tropical semiring, such as \cite{eqtrop, tropideal}, a more general framework encompassing that of \cite{grig} have been proposed in \cite{gianmereta}. This allows for a tropical theory of differential equations to be developed also in the case of nontrivially valued fields of coefficients and in this context a fundamental theorem for tropical differential algebra, extending that of \cite{AGT}, have been proven in \cite{merfund}. We will state our results in this language.

The fundamental theorem of tropical differential equations states equality between the (weighted) support of formal power series solutions of a differential ideal and the tropical solutions of the corresponding tropicalized differential system over a valued uncountable algebraically closed field of characteristic zero $K$, see Theorem~\ref{thm:ft_powerseries}. 
The direct extension to more general formal series solutions such as formal Laurent series and formal Puiseux series fails as it is presented in~\cite[Section 7]{falkensteiner2023algebro}. 
In the present work, we use transformations of the differential ideal and its solution set to overcome this issue.

Transformations of differential equations and its solution sets are well studied in the literature. Of particular interest have been diffeomorphisms and homeomorphisms of dynamical systems and their solutions, the main subject of differential topology. 
For non-autonomous and algebraic differential equations, and their solutions, less is known. In these cases, transformations are usually defined simultaneously on the coordinates and the functions; for rational solutions see~\cite[Section 4]{falkensteiner2023algebro} and references therein. 
In this paper, we are loosely following the approach from~\cite{falkensteiner2023algebro} and use rational transformations. 
More precisely, we will study rational maps $\sigma(\bold{t},\bold{x})=(\bold{s}(\bold{t}),\bold{z}(\bold{t},\bold{x}))$ on the vector of independent variables $\bold{t}=(t_1,\ldots,t_m)$ and differential indeterminates $\bold{x}=(x_1,\ldots,x_n)$. In order to maintain the differential structure, we will have to perform the transformations on the derivatives of the $\bold{x}$ accordingly. This will give us a new differential system in $\bold{t}$ and $\bold{x}$. For applying the transformation in the fundamental theorem, we additionally require that
\begin{enumerate}
    \item $\bold{z}$ maps $(\bold{t},K[\![\bold{t}]\!]^n)$ injectively to $K[\![\bold{t}]\!]^n$;
    \item there has to be a tropical analogue of $\sigma$, i.e., there must be $\Sigma(\bold{t},\bold{x})=(\bold{S}(\bold{t}),\bold{Z}(\bold{t},\bold{x}))$ acting on support sets $\bold{x}$ such that $\bold{Z}(\bold{t},\trop(\varphi))=\trop(\bold{z}(\bold{t},\varphi))$ for $\varphi \in K[\![\bold{t}]\!]^n$;
    \item since we consider formal power series expanded around the origin, we require that $\psi(0)=\bold{z}(\bold{t},\varphi(\bold{t}))|_{\bold{t}=0}$ is defined and therefore assume that $0 \in \im(\bold{s})$ such that w.l.o.g. we set $\bold{s}(0)=0$; and 
    \item since $\bold{z}(\bold{t},\bold{c})$ has to be defined for every constant $\bold{c} \in K^n$, we assume that $\bold{z}(\bold{t},\bold{x})$ is polynomial in $\bold{x}$.
\end{enumerate}
For applying such transformations to univariate formal Laurent series and formal Puiseux series, we use particular choices for $\sigma$, namely the shift $\varphi \mapsto t^n \cdot \varphi$ and the power map $t \mapsto t^n$, respectively. In the multivariate case, this transformation is a bit more subtle. 
The actual computation for the value $n$ in the shift and the power map can be found for concrete differential equations for instance by the Newton polygon method for differential equations, see e.g.~\cite{aroca2003power} and references therein. Here we do not further study this finding and note that there is no general algorithm known for finding minimal values $n$.

In Section~\ref{sec-generalizationPuiseux}, we first recall the classical formulations of the fundamental theorem for tropical differential algebra. We then show the extension to formal Laurent series solutions and formal Puiseux series solutions by manipulating the given differential ideal in an appropriate way (see Theorem~\ref{thm:ft_PuiseuxseriesOrdinary} for the ordinary case and Theorem~\ref{thm:ft_PuiseuxseriesPartial} for the multivariate case). 
In Section~\ref{sec-generalizationLogarithms}, we show that such transformations $\sigma$ cannot be used for mapping transseries to formal power series. Thus, we can not generalize the fundamental theorem to transseries solutions in this way. Moreover, we show that one inclusion of the fundamental theorem still holds in the case of transseries without any transformations.
We conclude the paper by some applications of the used transformations in the setting of approximation theory.
 
\section{How to generalize the fundamental theorem to formal Puiseux series?}\label{sec-generalizationPuiseux}
The fundamental theorem of tropical (partial) differential equations is formulated and proven for (multi-variate) formal power series solutions of systems of algebraic differential equations with formal power series coefficients over an uncountable, algebraically closed fields of characteristic zero $K$. 
In the ordinary case, where these series are univariate, the coefficients of the given differential system may be generalized to formal Laurent series. 
Essentially, this can be achieved by multiplying with a monomial of lowest order occuring as a coefficient in the system. 
Note that we may assume that the given system is a prime differential ideal and finitely represented by a basis (see Ritt-Raudenbush basis theorem) and thus, this order is indeed finite. 
Since the obtained system has again formal power series coefficients, and their solution set is the same, the fundamental theorem can be applied then. 
More generally, formal Puiseux series coefficients of the given differential system can be transformed back to formal power series with a power map in the independent variable.

In this document, we instead give a generalization of the solution set to formal Puiseux series. The applied transformations, however, change the differential system and also the solution set such that we can apply the fundamental theorem for the case of formal power series solutions. 
Since this transformation is one-to-one, we can recover the original system with its formal Puiseux series solutions. 
We present a formulation of the fundamental theorem by using inverse limits.

As we want to use the language of semirings to state our definitions and results, here we make a quick recap about semirings and valuations. A \emph{semiring} $(S, \oplus, \odot)$ is an algebraic structure satisfying all the axioms to be a ring, but the existence of additive inverses. A semiring is said to be \emph{idempotent} if for every $a \in S$ we have that $a \oplus a = a$. 
\begin{ex}
\begin{enumerate}
\item The set $\B = \{0,\infty\}$ equipped with operations $\oplus := \min$ and $\odot := +$ is an idempotent semiring, called the idempotent semiring of Boolean numbers;
\item The set $\T = \R \cup \{\infty\}$ equipped with operations $\oplus := \min$ and $\odot: = +$ is an idempotent semiring, called the idempotent semiring of tropical numbers. It contains $\B$ as a subsemiring;
\item For every $n \ge 1$ let $\T_n = \R^n \cup \{\infty\}$ with $\oplus$ given by lexicographic minimum and $\odot := +$. It is an idempotent semiring, called the idempotent semiring of rank $n$ tropical numbers.
\end{enumerate}
\end{ex}
A sum of elements in an idempotent semiring is said to \emph{tropically vanish} if by deleting any of the summands its result does not change.
We adopt the following generalised definition of valuation, as introduced and used in \cite{eqtrop, gianmereta, merfund}: a \emph{valuation} on a ring $R$ is a map $v\co R \to S$ to an idempotent semiring $S$ satisfying the following conditions for all $a,b \in \T_n$:
\begin{multicols}{2}
\begin{enumerate}
\item	$v(0)  = 0_S$,
\item	$v(1) = v(-1) = 1_S$,
\item	$v(ab) = v(a) \odot v(b)$,
\item	$v(a+b) \oplus v(a) \oplus v(b)$ tropically vanishes. 
\end{enumerate}
\end{multicols}
When $S=\T_n$ for some $n$, we will say that the valuation $v$ is of \emph{rank n}. Let us note that condition (4) is equivalent to saying that the minimum of the terms $v(a+b), v(a), v(b)$ is achieved at least twice.

Let $K$ be an uncountable algebraically closed field of characteristic zero equipped with a valuation $v_K \co K \to \T$. Let $R_{m,n} = K[\![t_1,\ldots,t_m]\!]\{x_1,\ldots,x_n\}$ denote the ring of differential polynomials in $x_1,\ldots,x_n$ with formal power series coefficients. 
Moreover, letting $\bold{t}:=(t_1,\ldots,t_m)$, we write $\Sol_A$ for the set of solutions in a differential algebra $A$ over $K[\![\bold t]\!]$. 
In the following, we will use multi-index notations and operations such as multiplication, raising powers etc. are performed component-wise.

For simplicity of treatment, here we will introduce the objects and definitions appearing in the statement of the fundamental theorem of tropical differential equations only in the case of ODEs (a treatment of the case of PDEs can be found in~\cite{falkfund} when $v_K$ is the trivial valuation and in~\cite[Chapter 8]{tesi} in full generality). 
In this case, the valuation $v_K$ on $K$ induces a rank 2 valuation $v\co \pk \to \T_2$ defined as
\[
A_{n_0}t^{n_0} + \dots \mapsto (n_0, v_K(A_{n_0})) \in \T_2.
\]
When $v_K$ is the trivial valuation the image of this map is contained in a subsemiring of $\T_2$ isomorphic to $\T$, thus we recover the $t$-adic valuation used in~\cite{grig} and~\cite{AGT}.

Given a differential polynomial $f \in R_{1,n}$ of order less or equal $r \in \Z_{\ge 0}$, let us write it as $f(x) = \sum_{\lambda \in \Lambda} A_\lambda x^\lambda$ where $\Lambda$ is a finite set of matrices in $\text{Mat}_{r \times n}(\Z_{\ge 0})$, $A_\lambda$ is an element of $\pk$ for every $\lambda$ and $x^\lambda$ is the differential monomial defined as $ \prod_{i,j} ( x_i^{(j)} ) ^{\lambda_{i,j}}$. 
The tropicalization $\trop_v(f)$ of $f$ with respect to $v$ is defined as the polynomial with coefficients in $\T_2$ obtained by applying $v$ coefficient-wise to $f$:
\[
\trop_v(f) := \sum_{\lambda \in \Lambda} v(A_\lambda) x^\lambda= \sum_{\lambda \in \Lambda} (\alpha_{\lambda} \text{, } \beta_{\lambda}) x^\lambda
\] 
for the same $\Lambda$ and for $(\alpha_{\lambda} \text{, } \beta_{\lambda}) \in \R^2$ for every $\lambda \in \Lambda$. This polynomial lives in an algebra of polynomials over $\T_2$ in countably many variables $x_i^{(j)}$'s. Even though we could not talk rigorously about elements of this algebra as tropical differential polynomials over $\T_2$ (in fact this matter is quite subtle, see \cite[Section 3]{gianmereta}), we will nevertheless refer to them as tropical differential polynomials, as this will not give rise to any ambiguity. 
Given a differential ideal $G \subseteq R_{1,n}$ its tropicalization is the set $\trop_v(G) := \{\trop_v(f) \mid f \in G\}$.

Given a power series $A \in \pk$, its tropicalization is performed by applying $v_K$ coefficientwise, obtaining an element of $\pt$. We will denote this map as $\widetilde v \co \pk \rightarrow \pt $.
Lastly let $\Phi \co \pt \rightarrow \T_2$ be defined as the homomorphism of semirings
\[
B_{n_0}t^{n_0} + \dots \mapsto (n_0, B_{n_0}) \in \T_2.
\]
\begin{oss}
The valuation $v$, the homomorphism $\Phi$ and the map $\widetilde v$ fit into a commutative triangle as follows:
\begin{equation} \label{diagram:diff-enh}
\begin{tikzcd}
& \pt \arrow[d, "\Phi"]  \\
\pk \arrow[ur, "\widetilde{v}"] \arrow[r, swap, "v"] & \T_2 .
\end{tikzcd}
\end{equation}
This is an instance of \emph{differential enhancement} of the valuation $v$ (see \cite[Section 4]{gianmereta}). Furthermore notice that the maps in the diagram above can we defined in the same way considering Laurent or Puiseaux series instead of power series, giving rise to an analogous commutative diagram as above. In this case, though, the map $\Phi$ fails to be \emph{reduced}: this makes the fundamental theorem of tropical differential equations fail in this case, for similar reasons as explained in Example \ref{ex:fail-fund-theorem}.
\end{oss}
We finally introduce the notion of solution to a tropical differential equation:
\begin{defin}\label{definition:tropical-solution}
Given a differential polynomial $f \in R_{1,n}$, a vector of tropical power series $B = (B_1, \dots, B_n) \in \pt^n$ is a \emph{solution} to the tropicalization of $f$ if the expression
\[
\trop_v(f)(B) := \bigoplus_{\lambda \in \Lambda} v(A_\lambda) \bigodot_{i,j}  ( \Phi(d^j(B_i)) ) ^{\odot{\lambda_{i,j}}} \in \T_2
\]
tropically vanishes. I.e.\ if when plugging in $ \Phi(d^j(B_i))$ for $x_i^{(j)}$, the minimum is achieved at least twice in $\T_2$.
\end{defin}

As here we will not deal with the fully general case as introduced in \cite[Section 4]{gianmereta}, and deal only with the case of the homomorphism $\Phi$, we will use the notation $\Sol_{\pt}(\trop_v(f))$ for the set of solutions to the tropicalization of the differential equation $f$ in $\pt^n$, as introduced in Definition \ref{definition:tropical-solution} above.
We have the following results:
\begin{teorema}[\cite{AGT,falkfund}]\label{thm:ft_powerseries}
Let $K$ be an uncountable, algebraically closed field of characteristic zero, and let $v_K$ be the trivial valuation. Let $G$ be a differential ideal in the ring $R_{m,n}$. Then 
\[ \trop_{\widetilde v}(\Sol_{K[\![\bold{t}]\!]}(G)) = \Sol_{\T[\![\bold{t}]\!]}(\trop_v(G)). \]
\end{teorema}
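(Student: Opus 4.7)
The theorem asserts equality of two sets, so I would tackle each inclusion separately.

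\textbf{The easy inclusion} $(\subseteq)$. Take $\varphi \in \Sol_{K[\![\bold t]\!]}(G)$ and $f = \sum_{\lambda \in \Lambda} A_\lambda x^\lambda \in G$. Since $f(\varphi) = 0$ in $K[\![\bold t]\!]$, every coefficient of every monomial in $\bold t$ vanishes. Focusing on the lowest-order coefficient that is nonzero in any single summand $A_\lambda \prod (\varphi_i^{(j)})^{\lambda_{i,j}}$, one sees that if the minimum among the values $v(A_\lambda)\odot \bigodot_{i,j}\Phi(d^j(\varphi_i))^{\odot\lambda_{i,j}}$ were attained by a unique $\lambda$, then the corresponding coefficient in $K[\![\bold t]\!]$ could not cancel. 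Hence the minimum is attained at least twice, which is precisely the tropical vanishing of $\trop_v(f)(\trop_{\widetilde v}(\varphi))$. This direction is formal and uses only axiom~(4) of a valuation.

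\textbf{The hard inclusion} $(\supseteq)$. Given a tropical solution $B \in \pt^n$ of $\trop_v(G)$, the plan is to build an actual solution $\varphi \in K[\![\bold t]\!]^n$ with $\trop_{\widetilde v}(\varphi)=B$ by induction on the total degree $d$ in $\bold t$. The tropical support $\supp B$ prescribes exactly which monomials in $\varphi$ may carry a nonzero coefficient; the remaining coefficients are free parameters in $K$ and must be chosen so that every $f$ in a fixed finite generating set of $G$ (available by the Ritt--Raudenbush basis theorem) vanishes modulo $\bold t^{d+1}$. Tropical vanishing of $\trop_v(f)$ at $B$ guarantees that, at each degree, the would-be leading terms cancel as an identity in the unknowns, so the system imposed at step $d$ is at worst a proper algebraic condition on the next layer of coefficients.

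\textbf{The main obstacle} is ensuring that the inductive step is \emph{always} solvable and that choices made at step $d$ do not lock out solvability at some later step. The standard device is to reduce, via a characteristic-set or Rosenfeld--Gröbner style decomposition of $G$, to a situation where separants and initials behave controllably (this is where characteristic zero enters), and then to observe that the locus of ``bad'' parameter choices over the entire construction is a countable union of proper Zariski closed subsets of an affine space over $K$. Uncountability of $K$ then allows a diagonal choice avoiding all of them, while algebraic closure ensures that each individual nonempty algebraic condition admits a solution. Assembling the coefficients degree by degree produces a formal power series $\varphi$ with $\trop_{\widetilde v}(\varphi)=B$, which closes the proof.
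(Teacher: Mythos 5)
First, a point of calibration: the paper itself gives no proof of Theorem~\ref{thm:ft_powerseries}; it is quoted from \cite{AGT} (ordinary case) and \cite{falkfund} (partial case), so your attempt can only be measured against those references. Your easy inclusion is correct and is the standard argument: if $f(\varphi)=0$ then the lowest-order terms of the summands $A_\lambda\prod_{i,j}(\varphi_i^{(j)})^{\lambda_{i,j}}$ must cancel, so the minimum defining $\trop_v(f)(\trop_{\widetilde v}(\varphi))$ is attained at least twice.

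The genuine gap is in the hard inclusion, at the sentence claiming that tropical vanishing of $\trop_v(f)$ at $B$ ``guarantees that, at each degree, the would-be leading terms cancel as an identity in the unknowns, so the system imposed at step $d$ is at worst a proper algebraic condition.'' That claim \emph{is} the theorem, and it does not follow from anything you have written: the hypothesis only says that for each individual $f\in G$ the minimum in $\trop_v(f)(B)$ is attained twice, and this gives no direct handle on the simultaneous solvability of the truncated systems ``$f_i(\varphi)\equiv 0 \bmod \bold{t}^{d+1}$, coefficients indexed by $\supp B$ nonzero, all others zero.'' In \cite{AGT} this step is handled by contraposition: if the constructible set of approximate solutions with the prescribed support pattern is empty at some finite level, one uses the Nullstellensatz on that constructible set and lifts the resulting algebraic certificate back into the differential ideal to manufacture an explicit $g\in G$ whose tropicalization is \emph{not} solved by $B$. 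Constructing that witness $g$ is the technical heart of the proof and is entirely absent from your sketch. Relatedly, your use of uncountability is in the right spirit but misstated: because $\trop_{\widetilde v}(\varphi)=B$ forces the prescribed coefficients to be \emph{nonzero}, the relevant sets are constructible rather than complements of countably many closed sets, choices at degree $d$ really can obstruct later degrees, and what resolves this is the lemma that a countable inverse system of nonempty constructible subsets of affine spaces over an uncountable algebraically closed field has nonempty inverse limit --- not a diagonal choice avoiding a countable union of closed loci. Finally, the partial case $m>1$ proved in \cite{falkfund} needs additional care with orderings of the monomials of $K[\![\bold{t}]\!]$ that an induction on total degree does not by itself provide.
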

In the case of ordinary differential equations the following generalisation holds:
\begin{teorema}[\cite{merfund}] \label{thm:ft_powerseries_with_coeffs}
Let $K$ be an uncountable, algebraically closed field of characteristic zero, equipped with a valuation $v_K \co K \rightarrow \T$. Let $G$ be a differential ideal in the ring $R_{1,n}$. Then 
\[ \trop_{\widetilde v}(\Sol_{K[\![\bold{t}]\!]}(G)) = \Sol_{\T[\![\bold{t}]\!]}(\trop_v(G)). \]
\end{teorema}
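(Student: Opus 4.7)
The plan is to prove the two inclusions separately, the forward one by direct inspection of the commutative triangle \eqref{diagram:diff-enh} and the reverse by a coefficient-wise inductive lift. For the inclusion $\trop_{\widetilde v}(\Sol_{\pk}(G)) \subseteq \Sol_{\pt}(\trop_v(G))$, I would take $\varphi \in \Sol_{\pk}(G)$ and $f = \sum_{\lambda \in \Lambda} A_\lambda x^\lambda \in G$. From $f(\varphi) = 0$ in $\pk$, applying $\widetilde v$ coefficient-wise and postcomposing with $\Phi$ forces $\trop_v(f)(\trop_{\widetilde v}(\varphi))$ to tropically vanish in $\T_2$: if the lexicographic minimum were attained uniquely, the corresponding monomial's contribution to $f(\varphi)$ would not cancel at its leading $(t\text{-order}, v_K)$-bidegree, contradicting $f(\varphi)=0$.

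For the reverse inclusion, I would fix $B = (B_1,\dots,B_n) \in \pt^n$ with $B \in \Sol_{\pt}(\trop_v(G))$ and construct $\varphi \in \pk^n$ with $\trop_{\widetilde v}(\varphi) = B$ and $\varphi \in \Sol_{\pk}(G)$. By the Ritt--Raudenbush basis theorem I may replace $G$ by a finite generating set $f_1,\dots,f_s$. The series $\varphi$ is then built as the inverse limit of truncations $\varphi^{(k)} \in K[t]^n$: assuming $\varphi^{(k)}$ has coefficients whose $v_K$-values match the relevant coefficients of $B$ up to degree $k$ and satisfies each $f_\ell$ to the expected $(t\text{-order}, v_K)$-precision prescribed by $\trop_v(f_\ell)(B)$, I would extend by choosing the degree-$(k+1)$ coefficients in $K^n$ subject to two constraints: the $v_K$-valuations are prescribed by the $(k+1)$-st coefficients of $B$, and the next-order cancellations of $f_1(\varphi^{(k+1)}),\dots,f_s(\varphi^{(k+1)})$ must hold. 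The tropical vanishing of $\trop_v(f_\ell)(B)$ at every $t$-degree guarantees that the minimum of the relevant terms is attained at least twice, so the linear-over-$K$ constraint system in the new coefficients is genuinely underdetermined inside the $v_K$-prescribed stratum.

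The main obstacle is compatibility across infinitely many levels. Following the uncountability argument of \cite{AGT}, at each level the set of admissible next-coefficients is the complement of a proper Zariski-closed subset of a valuation-prescribed ``torus'' inside $K^n$; since $K$ is uncountable and algebraically closed, a countable intersection of such generic subsets inside such a stratum is still nonempty, so the sequence $\varphi^{(k)}$ can be chosen coherently and its limit $\varphi$ lies in $\pk^n$ with $\trop_{\widetilde v}(\varphi) = B$ and $\varphi \in \Sol_{\pk}(G)$. The genuine novelty over the trivially valued setting of Theorem \ref{thm:ft_powerseries} is that the coefficient space at each inductive step is no longer all of $K^n$ but a $v_K$-stratum, so the density and genericity arguments that ensure nonemptiness of the admissible set must be upgraded to the valued setting; this is where one invokes the framework of \cite{gianmereta} and the detailed analysis carried out in \cite{merfund}.
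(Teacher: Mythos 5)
The paper does not actually prove this statement: Theorem~\ref{thm:ft_powerseries_with_coeffs} is imported verbatim from \cite{merfund} (with the trivially valued case coming from \cite{AGT,falkfund}), so the only meaningful comparison is with the strategy of those references. Your forward inclusion is fine and is the standard argument. The reverse inclusion, however, has a genuine gap at its crux. After invoking Ritt--Raudenbush to replace $G$ by generators $f_1,\dots,f_s$, you use only the tropical vanishing of $\trop_v(f_\ell)(B)$ for the generators to argue that the coefficient-wise constraint system is underdetermined at each step. But $\Sol_{\pt}(\trop_v(\{f_1,\dots,f_s\}))$ is in general strictly larger than $\Sol_{\pt}(\trop_v(G))$ --- tropicalization does not commute with generating a (radical) differential ideal --- so an argument consuming only the generators' tropical conditions would prove a false, stronger statement. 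The hypothesis that must be used is that $B$ solves $\trop_v(f)$ for \emph{every} $f\in G$, and the known proofs use it contrapositively: one shows that if no $\varphi\in\Sol_{\pk}(G)$ with $\supp(\varphi)\subseteq\supp(B)$ has support exactly $\supp(B)$, then some coefficient functional vanishes identically on that solution locus, and from this one manufactures a specific element of $G$ (typically not among the generators) whose tropicalization $B$ fails to solve. You assume the conclusion of exactly this step when you assert that the linear system in the new coefficients is ``genuinely underdetermined.''

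A second, related problem is the truncation framing itself. Building $\varphi$ as a limit of finite truncations each satisfying $f_1,\dots,f_s$ ``to the expected precision'' and then passing to the limit is an approximation-type claim that is not automatic for differential equations; the choices at level $k+1$ are constrained by all earlier choices, so the admissible sets do not sit as complements of proper closed subsets of one fixed irreducible variety, and a naive ``countable intersection of generic sets is nonempty'' does not apply. The argument of \cite{AGT} avoids this by working in the pro-affine space of all coefficients simultaneously and using the lemma that a countably-generated affine scheme over an uncountable field is not a countable union of proper closed subschemes. Finally, the part of the statement that genuinely goes beyond Theorem~\ref{thm:ft_powerseries} --- handling a nontrivial $v_K$, where the coefficient strata are cut out by valuation conditions rather than Zariski conditions --- is deferred entirely to \cite{merfund}, so the one new difficulty is named but not addressed.
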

In the second case, we suspect that an analogous theorem for PDEs should hold. To simplify the treatment in the following, we will focus on the trivially valued case from here on, this way we will be able to treat both the case of ODEs and PDEs simultaneously.

\subsection{Ordinary case}
Throughout the section, let $G$ be a differential ideal in the ring $R_{1,n}$. 

\subsubsection*{Laurent series}
Let us look for formal Laurent series solutions of $G$ instead of formal power series solutions. A univariate formal Laurent series $\varphi = \sum_{i \ge 0} c_it^{i-l}$ with $c_i \in K$, $l \in \Z$, can be transformed to the formal power series $t^{l} \cdot \varphi(t) = \sum_{i \ge 0} c_it^i$ by the transformation $\sigma_l(t,x)=(t,t^{l}\,x)$. 
Let us note that $l$ could be chosen as positive number, because otherwise the given Laurent series $\varphi$ is already a formal power series. 
In order to keep the original and transformed system distinct, we will write $z=t^{l}\,x$. 
For the derivatives we obtain that
\[
z' = l t^{l-1} \cdot x + t^{l} \cdot x', \, \text{ etc. } 
\]
Let us define $\sigma_l(t,x_1,\ldots,x_n)=(t,t^{l}\,x_1,\ldots,t^{l}\,x_n)$. 
In $G$, we thus replace $x_j$ by $z_j:=t^{l}\,x_j$ and the derivatives by $x_j'=\frac{z_j'-l t^{l-1}z_j}{t^{l}}$, and so on. 
By taking the numerators, we obtain a system in $K[\![t]\!]\{z_1,\ldots,z_n\}$, denoted by $G^{(l)}$. 
Moreover, a formal Laurent series $\varphi=(\varphi_1,\ldots,\varphi_n)$, where all $\varphi_j$ have order greater or equal to $l$, is a solution of $G$ if and only if $(\psi_1,\ldots,\psi_n) := (t^{l}\,\varphi_1,\ldots,t^{l}\,\varphi_n)$ is a formal power series solution of $G^{(l)}$.

\subsubsection*{Puiseux series}\label{sec-Puiseux}
Let $\varphi = \sum_{i \ge 0} c_it^{(i-l)/k}$ be a Puiseux series where $c_i \in K$, $l \in \Z$, and $k>0$ is the ramification index, i.e., the minimal number such that $\varphi(t^k)$ is a formal Laurent series. 
Set $\sigma_{k,l}(t,x) = (t^{k},t^{l}\,x)$. Then $\psi:= t^{l}\,\varphi(t^k)$ is a formal power series in $t$ and we transform the derivatives as $x' = kt^{k-1+l} \cdot z' + l t^{l-1} \cdot z$, etc. 
In $G$, we thus replace $t$ by $t^k$, $x_j$ by $t^{l} \cdot z_j$ and the derivatives $x_j' = \frac{z_j'-l t^{l -1}z_j}{kt^{k-1+l}}$, etc. 
After considering just the numerators, we obtain a system $G^{(k,l)} \in R_{1,n}$ such that $(\varphi_1,\ldots,\varphi_n)$ is a formal Puiseux series solution for $G$ if and only if $(\psi_1,\ldots,\psi_n) := (t^{l}\,\varphi_1(t^k),\ldots,t^{l}\,\varphi_n(t^k))$ is a formal power series solution for $G^{(k,l)}$.

\subsubsection*{Fundamental theorem}
Let us now connect the above observations to the fundamental theorem. 
We will work with radical differential ideals $G$ because the solution set of any differential ideal and its radical coincide. For a given set of differential polynomials $\mathcal{F}$, we denote by $[\mathcal{F}]$ the smallest radical differential ideal containing $\mathcal{F}$. 
According to Ritt-Raudenbush theorem~\cite{boulier2019ritt}, every radical differential ideal is finitely generated. 
Let us now show that the generators of a radical differential ideal transform properly under the above transformation.

\begin{lem}\label{lem:generatorsaftertransformation}
Let $G$ be the radical differential ideal generated by $F_1,\ldots,F_N \in R_{1,n}$, i.e. $G=[F_1,\ldots,F_N]$. 
Then $G^{(k,l)} = [F_1^{(k,l)},\ldots,F_N^{(k,l)}]$.
\end{lem}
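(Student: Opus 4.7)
The plan is to exploit that the substitution $\sigma_{k,l}$ extends to a ring homomorphism on a localization of $R_{1,n}$ and to track how this homomorphism interacts with the radical-differential-ideal operations. I want to prove both inclusions of the claimed equality.

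For the inclusion $[F_1^{(k,l)},\ldots,F_N^{(k,l)}] \subseteq G^{(k,l)}$ it suffices to show that $G^{(k,l)}$ is itself a radical differential ideal, since each $F_i$ lies in $G$ and hence each $F_i^{(k,l)}$ lies in $G^{(k,l)}$ by definition. For this, the key observation is that $F \mapsto F^{(k,l)}$ is induced by a ring homomorphism $\sigma^* \co R_{1,n} \to R_{1,n}[t^{-1}]$ sending $t \mapsto t^k$ and $x_j \mapsto t^l z_j$, with derivatives substituted according to the chain rule; one then writes $F^{(k,l)} = t^{M_F}\sigma^*(F)$ for the smallest integer $M_F \geq 0$ that clears denominators. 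The chain-rule identity $\partial_t \sigma^*(F) = k t^{k-1} \sigma^*(\partial_s F)$, together with the ring-homomorphism property, imply after clearing denominators that $(F+H)^{(k,l)}$, $(gF)^{(k,l)}$, $(F^{(j)})^{(k,l)}$ and $(F^m)^{(k,l)}$ all lie in the radical differential ideal generated by $F^{(k,l)}$ and $H^{(k,l)}$, which yields closure of $G^{(k,l)}$ under all the operations defining a radical differential ideal.

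For the reverse inclusion I would take $F \in G$ and use that by definition of radical differential ideal there exist $m \geq 1$ and elements $g_{i,j} \in R_{1,n}$ such that $F^m = \sum g_{i,j}\, F_i^{(j)}$. Applying $\sigma^*$ and clearing the denominators introduced by the chain rule produces an identity in $R_{1,n}$ of the form
\[
t^M (F^{(k,l)})^m \;=\; \sum \tilde g_{i,j}\, \partial_t^{j'} F_i^{(k,l)},
\]
for some $M \geq 0$ and some $\tilde g_{i,j} \in R_{1,n}$, placing $t^M(F^{(k,l)})^m$ in the differential ideal generated by the $F_i^{(k,l)}$. Passing to the radical then gives $t \cdot F^{(k,l)} \in [F_1^{(k,l)},\ldots,F_N^{(k,l)}]$, and the residual factor of $t$ is eliminated by noting that $\sigma_{k,l}$ is bijective between formal Puiseux-series solutions of $G$ and formal power series solutions of $G^{(k,l)}$, which forbids any positive power of $t$ from lying in $[F_1^{(k,l)},\ldots,F_N^{(k,l)}]$.

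I expect the main obstacle to be precisely this bookkeeping of $t$-powers: the chain-rule denominators introduced when replacing $x_j^{(j)}$ by its transformed value produce extra factors of $t$ that must be shown to be harmless, both when verifying closure of $G^{(k,l)}$ and when removing the surplus $t$-factor in the reverse inclusion. A secondary, routine complication is handling higher-order derivatives $F^{(j)}$ in the chain rule, which is settled by a straightforward induction on $j$ using the first-order compatibility.
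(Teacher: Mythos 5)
Your route is genuinely different from the paper's: the paper settles the lemma in three lines by passing to characteristic sets of the prime components of $G$ and observing that orders, leaders, initials and separants are preserved by $\sigma_{k,l}$ up to monomial factors, whereas you argue directly with the substitution homomorphism $\sigma^*$ and with radical membership certificates $F^m=\sum g_{i,j}F_i^{(j)}$ (legitimate here, since in characteristic zero the radical of a differential ideal is again a differential ideal, so $[F_1,\ldots,F_N]$ is the radical of the differential ideal generated by the $F_i$). The core of your second paragraph --- twist the derivation by $kt^{k-1}$, clear denominators, and land $t^M(F^{(k,l)})^m$ in the differential ideal generated by the $F_i^{(k,l)}$ --- is correct and carries the substantive inclusion; it is more elementary and more explicit than the paper's argument.

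There are two gaps. The decisive one is the final cancellation of $t$: from $t\cdot F^{(k,l)}\in I:=[F_1^{(k,l)},\ldots,F_N^{(k,l)}]$ together with ``no power of $t$ lies in $I$'' you cannot conclude $F^{(k,l)}\in I$, because $I$ is only radical, not prime, and radical ideals admit no such cancellation; your solution-set argument addresses the wrong obstruction. The conclusion is true, but for a different reason: by Ritt's lemma, if $I$ is a radical differential ideal and $ab\in I$, then $a'b\in I$ (differentiate to get $a'b+ab'\in I$, multiply by $a'b$, use $(a'b)(ab')=(ab)(a'b')\in I$ to get $(a'b)^2\in I$, and conclude by radicality); taking $a=t$, $b=F^{(k,l)}$ and using $d(t)=1$ gives $F^{(k,l)}\in I$ outright. (Equivalently, decompose $I$ into prime differential components and note that no proper differential ideal of $R_{1,n}$ contains $t$, again because $d(t)=1$.) The secondary issue is your first inclusion: the set $\{F^{(k,l)}\mid F\in G\}$ is \emph{not} a radical differential ideal --- $F^{(k,l)}+H^{(k,l)}=t^{M_F}\sigma^*(F)+t^{M_H}\sigma^*(H)$ need not equal $t^{M}\sigma^*(F')$ for any $F'\in G$ when $M_F\not\equiv M_H \pmod{k}$, since only powers of $t^k$ are in the image of $\sigma^*$ --- so the closure you announce cannot be established, and the facts you actually prove (that $(F+H)^{(k,l)}$, $(gF)^{(k,l)}$, etc.\ lie in the radical differential ideal \emph{generated by} $F^{(k,l)}$ and $H^{(k,l)}$) are a different statement. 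This half of the argument becomes unnecessary once $G^{(k,l)}$ is read, as it must be for the lemma to assert an equality of radical differential ideals, as the radical differential ideal generated by $\{F^{(k,l)}\mid F\in G\}$; under that reading the inclusion $\supseteq$ is immediate and all the content sits in your (repairable) second paragraph.
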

\begin{proof}
For the proof, we use characteristic sets representing the prime components of a given radical differential ideal~\cite{kolchin1973differential}. 
Since the order, leaders, initials and separants of $G$ and $G^{(k,l)}$ coincide (up to multiplication with monomials in $K[t]$ for the latter two), the characteristic sets of the prime components of $G$ and $G^{(k,l)}$, denoted by $\Sigma$ and $\Sigma^{(k,l)}$ respectively, are transformed into each other.
\end{proof}

In the following, let us denote by $K(\!(t)\!)= K[t^{-1}][\![t]\!]$ and $K\{\! \{t\}\!\}=\bigcup_{k \in \Z_{>0}} K(\!(t^{1/k})\!)$ the field of formal Laurent series and the field of formal Puiseux series, respectively. 
Since the transformations above change the support of the solutions, namely by a shift $l$ or a multiplication with $\tfrac{1}{k}$, we also have to include this in the solutions of the tropicalized differential systems.
Then the transformations above can be written as follows.

\begin{prop}\label{prop:solutionTransformation}
Let $K$ be a field of characteristic zero, equipped with a valuation $v_K \co K \rightarrow \T$ and $G \subset R_{1,n}$ be a differential ideal. With notations as in diagram \ref{diagram:diff-enh} for Puiseaux series over $K$, the following holds:
\[ \trop_{\widetilde v}(\Sol_{K\{\! \{t\}\!\}}(G)) = \bigcup_{l, k \in \N} \{t^{-l}A(t^k) \mid A(t) \in \trop_{\widetilde v}(\Sol_{\pk}(G^{(k,l)}))\}.
\]
\end{prop}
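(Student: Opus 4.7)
The plan is to reduce the equality to the classical bijection from the Puiseux subsection, which identifies Puiseux series solutions of $G$ with formal power series solutions of the transformed ideal $G^{(k,l)}$, together with the observation that $\widetilde v$, being defined coefficient-wise, commutes with the substitution $t \mapsto t^k$ and with multiplication by $t^l$.

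For the inclusion $\subseteq$, I would take $\varphi \in \Sol_{K\{\!\{t\}\!\}}(G)$, pick $k \in \N_{>0}$ to be a common multiple of the ramification indices of the components of $\varphi$, and choose $l \in \N$ large enough that $\psi := (t^l \varphi_1(t^k),\dots,t^l \varphi_n(t^k))$ lies in $\pk^n$. The classical bijection places $\psi$ in $\Sol_{\pk}(G^{(k,l)})$, and the coefficient-wise commutation shows that $\trop_{\widetilde v}(\varphi)$ is recovered from $A := \trop_{\widetilde v}(\psi)$ by the inverse of the tropical analogue of $\sigma_{k,l}$, placing $\trop_{\widetilde v}(\varphi)$ in the right-hand side for this choice of $(k,l)$. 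The reverse inclusion $\supseteq$ follows by running the same argument backwards: given $A = \trop_{\widetilde v}(\psi)$ for some $\psi \in \Sol_{\pk}(G^{(k,l)})$, apply the inverse of $\sigma_{k,l}$ to $\psi$ to obtain a Puiseux tuple $\varphi \in \Sol_{K\{\!\{t\}\!\}}(G)$ whose tropicalization is exactly the required element of the right-hand side.

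The step I expect to be most delicate is twofold. First, one must invoke Lemma~\ref{lem:generatorsaftertransformation} to guarantee that clearing denominators in passing from $G$ to $G^{(k,l)}$ does not introduce spurious solutions: the transformed ideal is the radical differential ideal generated by the transformed generators, so the classical solution sets correspond exactly under $\sigma_{k,l}$. Second, in the $\subseteq$ direction one has to ensure that a single pair $(k,l)$ can be chosen simultaneously for all $n$ components of $\varphi$; this is always possible by taking a common multiple of the ramification indices and taking $l$ large enough, and it is precisely the reason the right-hand side is written as a union over $(k,l) \in \N^2$ rather than as a single term.
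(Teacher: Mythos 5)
Your proposal is correct and follows essentially the same route as the paper: the forward inclusion comes from the definition of $G^{(k,l)}$ (choosing $k$, $l$ from the ramification data of $\varphi$, as the paper does implicitly), and the reverse inclusion applies the formal inverse $\sigma_{k,l}^{-1}$ together with Lemma~\ref{lem:generatorsaftertransformation} to see that the pulled-back equations still generate $G$. The only difference is that you spell out the compatibility of $\trop_{\widetilde v}$ with the substitution $t\mapsto t^k$ and multiplication by $t^l$, which the paper leaves implicit.
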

\begin{proof}
From the definition of $G^{(k,l)}$, as we have seen above, it follows that every formal Puiseux series solution of $G$ transforms into a formal power series solution of $G^{(k,l)}$ for some $k,l \in \Z_{>0}$.

For the converse direction, let $\psi=(\psi_1,\ldots,\psi_n)$ be a formal power series solution for some $G^{(k,l)}$ with $k,l \in \Z_{>0}$. 
Then the formal inverse $\sigma_{k,l}^{-1}(t,z):=(t^{1/k},z/t^{l})$ transforms every equation of $G^{(k,l)}$ back to an equation in $G$, where we might have to multiply with elements in $K[\![t]\!]$.
Since $\sigma_{k,l}^{-1}(t,\psi)$ defines a formal Puiseux series solution $\varphi$ of the transformed equations and, by Lemma~\ref{lem:generatorsaftertransformation}, they generate $G$, $\varphi$ is a formal Puiseux series solution of $G$.
\end{proof}

In Proposition~\ref{prop:solutionTransformation}, for the right hand side, the fundamental theorem for formal power series, Theorem \ref{thm:ft_powerseries} and \ref{thm:ft_powerseries_with_coeffs} can be applied so that we obtain the following generalization:

\begin{teorema}\label{thm:ft_PuiseuxseriesOrdinary}
Let $K$ be an uncountable, algebraically closed field of characteristic zero, equipped with a valuation $v_K \co K \rightarrow \T$. Let $G \subset R_{1,n}$ be a differential ideal. Then 
\[ 
\trop_{\widetilde v}(\Sol_{K\{\!\{t\}\!\}}(G)) = 
\bigcup_{l, k \in \N} \{t^{-l}A(t^k) \mid A(t) \in \Sol_{\T[\![t]\!]}(\trop(G^{(k,l)}))\}.
\]
\end{teorema}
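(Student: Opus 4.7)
The plan is to chain together Proposition \ref{prop:solutionTransformation}, which rewrites the left-hand side as a union indexed by $(k,l)$ of tropicalized supports of formal power series solutions of the transformed systems $G^{(k,l)}$, with the fundamental theorem for formal power series with valued coefficients in the ordinary case (Theorem \ref{thm:ft_powerseries_with_coeffs}), applied termwise to each $G^{(k,l)}$.

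First, because the solution set of a differential ideal coincides with that of its radical, I would replace $G$ by its radical; under this assumption Lemma \ref{lem:generatorsaftertransformation} ensures that for every $(k,l)$ the transformed system $G^{(k,l)}$ is itself a (radical) differential ideal in $R_{1,n}$. Next I would invoke Proposition \ref{prop:solutionTransformation} to obtain
\[ \trop_{\widetilde v}(\Sol_{K\{\!\{t\}\!\}}(G)) = \bigcup_{l, k \in \N} \{t^{-l}A(t^k) \mid A(t) \in \trop_{\widetilde v}(\Sol_{\pk}(G^{(k,l)}))\}. \]
Then, for each fixed pair $(k,l)$, I would apply Theorem \ref{thm:ft_powerseries_with_coeffs} to the differential ideal $G^{(k,l)} \subset R_{1,n}$ to get
\[ \trop_{\widetilde v}(\Sol_{\pk}(G^{(k,l)})) = \Sol_{\pt}(\trop_v(G^{(k,l)})), \]
and substituting this identity inside the union produces exactly the right-hand side of the theorem.

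The bulk of the content has already been packaged into Proposition \ref{prop:solutionTransformation} and Lemma \ref{lem:generatorsaftertransformation}, so I do not expect a serious new technical obstacle at this stage; the only verification needed is that the substitution commutes with the union, which is immediate because the indexing sets on both sides agree and the substitution is term-by-term. The conceptual point worth flagging is that it is essential to apply the fundamental theorem to each $G^{(k,l)}$ rather than to $G$ itself: the failure of the direct Puiseux generalization is precisely what forces the detour through the transformed systems, so the real effort of the argument lives in the setup of $\sigma_{k,l}$ and $G^{(k,l)}$ carried out earlier in the section.
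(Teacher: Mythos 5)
Your proposal is correct and matches the paper's argument exactly: the paper likewise obtains the theorem by combining Proposition~\ref{prop:solutionTransformation} with a termwise application of the fundamental theorem for formal power series (Theorem~\ref{thm:ft_powerseries_with_coeffs} in the valued ordinary case) to each transformed ideal $G^{(k,l)}$. Your additional remarks about passing to the radical and invoking Lemma~\ref{lem:generatorsaftertransformation} are consistent with the paper's setup and do not change the route.
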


\subsection{Partial case}
Let us now generalize the previous results to the multivariate case of $R_{m,n}$ with $m>1$, when the field of coefficients $K$ is equipped with the trivial valuation. 
For this purpose, let us introduce the field of formal Puiseux series in several variables as follows.

A {\em convex rational polyhedral cone} is a subset of $\R^m$ of the form
\[C = \{ \lambda_1v_1+\cdots+\lambda_rv_r \mid \lambda_i \in \R, \lambda_i \ge 0\}, \]
where $v_1,\ldots,v_r \in \Q^m$ are vectors. 
A cone is said to be {\em strongly convex} if it contains no nontrivial linear subspaces.
Let $C$ be a strongly convex rational polyhedral cone and let $\bold{d}=(d_1,\ldots,d_m) \in \Z_{>0}^m$. Set $C_\bold{d} = C \cap (\frac{1}{d_1} \cdot \Z,\ldots,\frac{1}{d_m} \cdot \Z)$. 
Using multi-index notation, the set of formal sums $\sum_{\mu \in C_\bold{d}} a_\mu t^\mu$, where $a_\mu \in K$, forms a ring and is denoted by $K[\![C_\bold{k}]\!]$. 
Let $\bold{w} \in \R_{>0}^m$ have rationally independent entries over $\Q$. 
Let $H_\bold{w}$ be the half-space $\{\bold{t} \in \R^m \mid \bold{w} \cdot \bold{t} \ge 0\}$. 
The ring of power series with support in a convex rational polyhedral cone in $H_\bold{w}$ and fractional exponents with denominators $\bold{d} \in \Z_{>0}^m$ is defined by
\[ K[\![\bold{t}]\!]_\bold{d}^\bold{w} = \bigcup_{C \subseteq H_\bold{w}} K[\![C_\bold{d}]\!]. \]
By allowing the support in some translate of $C$ we obtain the field of {\em (multivariate-) formal Laurent series}
\[K(\!(\bold{t})\!)_\bold{d}^\bold{w}= \bigcup_{\bold{l} \in (\frac{1}{d_1} \cdot \Z, \ldots, \frac{1}{d_m} \cdot \Z)} \bold{t}^{\bold{l}}\cdot K[\![\bold{t}]\!]^\bold{w}\]
and the field of {\em (multivariate) formal Puiseux series}
\[ K(\!(\bold{t})\!)_*^\bold{w} = \bigcup_{\bold{d} \in \Z_{>0}^m} K(\!(\bold{t})\!)_\bold{d}^\bold{w}.\]
Let us note that a different choice of $\bold{w}$ leads to a different representation of the Puiseux series. For a given system of differential equations, their Puiseux series solutions can thus be translated one-to-one for different $\bold{w}$'s.

Following~\cite[Section 8]{aroca2003power}, a formal Puiseux series $\varphi$ with support in a cone $\bold{l}+C_\bold{d}$ can be transformed into a formal power series with exponents in the first quadrant. 
More precisely, there exists a rational map $\kappa(\bold{t})$ with $\kappa(0)=0$, $\varphi(\kappa(\bold{t})) \in K[\![\bold{t}]\!]$ and formally invertible as a vector of Laurent monomials with differentiable inverse, found as a finite composition of ``combinatorial blow-ups'', i.e. transformations 
\[
\sigma_{I,j}(t_1,\ldots,t_m)=(s_1,\ldots,s_m) ~\text{ with }~ s_i = \begin{cases} t_i\,t_j  & i \in I, i \ne j \\ t_i & \textit{otherwise} \end{cases}
\]
where $I \subset \{1,\ldots,m\}$ contains at least two elements and $j \in I$. 
Thus, eventually we have that 
\[
\kappa(\bold{t})=(t_1 \cdot \bold{t}^{\alpha_1},\ldots,t_m \cdot \bold{t}^{\alpha_m})
\]
for some $\alpha_i \in \Z_{\ge 0}^m$ where we use the notation $\bold{t}^{\beta} = t_1^{\beta_1} \cdots t_m^{\beta_m}$. Let us denote by $\mathcal{B}$ the set of all such maps $\kappa$ that are a finite composition of combinatorial blow-ups. 
Additionally, we perform the shift by the translation vector $\bold{l}$ such that we obtain the transformation $\sigma_{\bold{d},\bold{l}}(\bold{t},\bold{x})=(\kappa(\bold{t}),\bold{t}^{\bold{l}}\,\bold{x})=:(\bold{s},\bold{z})$ for $\kappa \in \mathcal{B}, \bold{l} \in (\frac{1}{d_1} \cdot \Z, \ldots, \frac{1}{d_m} \cdot \Z)$ where $\kappa$ is depending on $\bold{w}$ and $\bold{d}$. 

Notice that the maps $\sigma_{\bold{d},\bold{l}}(\bold{t},\bold{x})$ and therefore $\kappa$ can be defined analogously on $\pb$ as all the operations in their definition involve transformations of the monomials of the power series. Also in this case we will denote them with the same symbols. 
Thus, we obtain
\[
\supp(\varphi_i(\kappa(\bold{t}))) =  \kappa(\supp(\varphi_i)).
\]

Let $G$ be a differential ideal. 
Similarly to the ordinary case, we apply the transformation $\sigma_{\bold{k},\bold{l}}(\bold{t},\bold{x})$ to $\bold{t}$, $\bold{x}$ and the partial derivatives of the components of $\bold{x}$ accordingly, namely by $\bold{x}' = \bold{t}^{-l-1}\,\kappa'^{-1} \cdot (\bold{t}\,\bold{z}'+l\,\bold{z})$ where the derivatives denote the Jacobi matrices w.r.t. $\bold{t}$. 
Let us note that $\kappa'$ is invertible and $\kappa'$ and $\kappa'^{-1}$ contain only monomials in $\Z[\bold{t}]$. 
Then a new system in $\bold{t}$ and $\bold{z}$ is obtained and we denote it, after cancelling $\bold{t}^{l}$ and multiplying with the common denominator in $K[\![\bold{t}]\!]$, by $G^{(\bold{k},\bold{l})}$.

For applying the fundamental theorem, we have to actually compute the formal inverse of the transformation $\sigma_{\bold{k},\bold{l}}$. 
The transformations $\sigma_{I,j}$ have the formal inverse $$\sigma_{I,j}^{-1}(\bold{s})= \bold{t} ~\text{ with }~ t_i= \begin{cases} s_i/s_j , & i \in I, i \ne j \\ s_i \end{cases}.$$
Thus, we obtain that $\kappa^{-1}$ is the finite decomposition of the $\sigma_{I,j}^{-1}$ and of the form $$\kappa^{-1}(\bold{s})=(s_1 \cdot \bold{s}^{\beta_1},\ldots,s_m \cdot \bold{s}^{\beta_m})$$ for some $\beta_1,\ldots,\beta_m \in \Z^m$ and $\sigma_{\bold{k},\bold{l}}^{-1}(\bold{s},\bold{z}) = (\kappa^{-1}(\bold{s}),\bold{s}^{-l} \cdot \bold{z})$. 
Since $\kappa$ is invertible such that the inverse again commutes with taking supports, we obtain
\begin{equation}\label{eq-sigmainverse}
\trop_{\widetilde v}(\varphi_i)= \kappa^{-1}(\trop_{\widetilde v}(\varphi_i(\kappa(\bold{t})))).
\end{equation}

Lemma~\ref{lem:generatorsaftertransformation} and Proposition~\ref{prop:solutionTransformation} can be generalized to the multivariate case by simply replacing $R_{1,n}$ with $R_{m,n}$ and every $k,l$ with $\bold{d},\bold{l}$ in the statements and its proofs.
Then, using the above transformation, in the case of trivial valuation, the fundamental theorem~\ref{thm:ft_PuiseuxseriesOrdinary} generalizes to the multi-variate case in the following sense.
\begin{teorema}\label{thm:ft_PuiseuxseriesPartial}
Let $K$ be an uncountable, algebraically closed field of characteristic zero. Let $G \subset R_{m,n}$ be a differential ideal and let $\bold{w} \in \R_{>0}^m$ have rationally independent entries over $\Q$. Then 

\[ 
\trop_{\widetilde v}(\Sol_{K(\!(\bold{t})\!)_*^\bold{w}}(G)) = 
\bigcup_{\bold{d},\bold{l}} \{ \sigma_{\bold{d},\bold{l}}^{-1}(A) \mid A(\bold{t}) \in \Sol_{\T[\![\bold{t}]\!]}(\trop(G^{(\bold{d},l)}))\}.
\]
where the union is taken over all $\bold{d} \in \Z_{>0}^m$ and $\bold{l} \in (\frac{1}{d_1} \cdot \Z, \ldots, \frac{1}{d_m} \cdot \Z)$.
\end{teorema}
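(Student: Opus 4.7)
The plan is to mimic the proof of the ordinary version (Theorem~\ref{thm:ft_PuiseuxseriesOrdinary}) by first reducing to the formal power series case via the transformations $\sigma_{\bold{d},\bold{l}}$, then invoking the trivially valued partial fundamental theorem (Theorem~\ref{thm:ft_powerseries}), and finally translating tropical solutions back with $\sigma_{\bold{d},\bold{l}}^{-1}$. The two building blocks, namely the multivariate analogue of Lemma~\ref{lem:generatorsaftertransformation} and of Proposition~\ref{prop:solutionTransformation}, are granted by the remark preceding the theorem, so what remains is to assemble them with the fundamental theorem and track supports through $\kappa^{-1}$.

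First, I would prove the inclusion $\subseteq$. Let $\varphi=(\varphi_1,\dots,\varphi_n) \in \Sol_{K(\!(\bold{t})\!)_*^\bold{w}}(G)$. Since each $\varphi_i$ has support contained in some translate $\bold{l}_i+C_{i,\bold{d}_i}$ of a strongly convex rational polyhedral cone in $H_\bold{w}$ with fractional exponents of denominators $\bold{d}_i$, by taking a common refinement of the cones and a common multiple of the $\bold{d}_i$ (and picking $\bold{l}$ coordinate-wise small enough) there exist $\bold{d} \in \Z_{>0}^m$ and $\bold{l} \in (\tfrac{1}{d_1}\Z,\dots,\tfrac{1}{d_m}\Z)$ together with a combinatorial blow-up $\kappa \in \mathcal{B}$ such that $\psi_i(\bold{t}) := \bold{t}^\bold{l}\,\varphi_i(\kappa(\bold{t})) \in K[\![\bold{t}]\!]$ for every $i$. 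By the multivariate analogue of Proposition~\ref{prop:solutionTransformation}, the tuple $\psi=\sigma_{\bold{d},\bold{l}}(\bold{t},\varphi)$ is a formal power series solution of $G^{(\bold{d},\bold{l})}$.

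Applying Theorem~\ref{thm:ft_powerseries} to $G^{(\bold{d},\bold{l})}$ in the ring $R_{m,n}$ yields $\trop_{\widetilde v}(\psi) \in \Sol_{\T[\![\bold{t}]\!]}(\trop(G^{(\bold{d},\bold{l})}))$. Using identity~(\ref{eq-sigmainverse}), which records that $\kappa^{-1}$ commutes with supports, we obtain $\trop_{\widetilde v}(\varphi_i) = \sigma_{\bold{d},\bold{l}}^{-1}(\trop_{\widetilde v}(\psi))_i$ componentwise, showing $\trop_{\widetilde v}(\varphi)$ lies in the right-hand side of the theorem.

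For the reverse inclusion $\supseteq$, I fix $\bold{d},\bold{l}$ and pick $A \in \Sol_{\T[\![\bold{t}]\!]}(\trop(G^{(\bold{d},\bold{l})}))$. Theorem~\ref{thm:ft_powerseries} provides a formal power series solution $\psi$ of $G^{(\bold{d},\bold{l})}$ with $\trop_{\widetilde v}(\psi)=A$; then $\varphi := \sigma_{\bold{d},\bold{l}}^{-1}(\bold{t},\psi)$, which is a vector of formal Puiseux series in $K(\!(\bold{t})\!)_*^\bold{w}$ because the support of $\psi$ is pulled back by $\kappa^{-1}$ into a translated cone, is a solution of $G$ by the multivariate Proposition~\ref{prop:solutionTransformation} (invoking the multivariate Lemma~\ref{lem:generatorsaftertransformation} to make sure that the generators, and hence the full ideal, are recovered from $G^{(\bold{d},\bold{l})}$). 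Applying~(\ref{eq-sigmainverse}) once more gives $\trop_{\widetilde v}(\varphi) = \sigma_{\bold{d},\bold{l}}^{-1}(A)$.

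The main obstacle I anticipate is bookkeeping: ensuring that a single pair $(\bold{d},\bold{l})$ and a single $\kappa \in \mathcal{B}$ can be chosen to handle all $n$ components of a given $\varphi$ simultaneously, and verifying that the coordinatewise action of $\sigma_{\bold{d},\bold{l}}^{-1}$ on supports really matches the geometric inverse of $\kappa$ via~(\ref{eq-sigmainverse}). Once these compatibilities are explicit, the proof reduces to a routine chase through the diagram and Theorem~\ref{thm:ft_powerseries}.
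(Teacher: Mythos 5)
Your proposal is correct and follows essentially the same route as the paper, which proves this theorem implicitly by generalizing Lemma~\ref{lem:generatorsaftertransformation} and Proposition~\ref{prop:solutionTransformation} to $R_{m,n}$, applying Theorem~\ref{thm:ft_powerseries} to the transformed systems $G^{(\bold{d},\bold{l})}$, and pulling tropical solutions back through $\sigma_{\bold{d},\bold{l}}^{-1}$ via the support identity~(\ref{eq-sigmainverse}). Your version is in fact more explicit than the paper's, spelling out both inclusions and the bookkeeping needed to choose a single $(\bold{d},\bold{l})$ and $\kappa$ for all components of $\varphi$.
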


\begin{ex}
Let us consider the differential ideal generated by the differential polynomials
\[ 
F_1 = t_1\,\frac{\partial\,x}{\partial t_1}+t_2\,\frac{\partial\,x}{\partial t_2}-x/2, \, \, \,  F_2 = \frac{\partial\,x}{\partial t_1}-\frac{\partial\,x}{\partial t_2}
\]
inside $\C[\![t_1, t_2]\!]\{x\}$.
By tropicalizing
\[\frac{\partial^{i+j}}{\partial t_1^i \partial t_2^j}F_1 = t_1\,\frac{\partial^{i+j+1}\,x}{\partial t_1^{i+1} \partial t_2^{j}}+t_2\,\frac{\partial^{i+j+1}\,x}{\partial t_1^{i} \partial t_2^{j+1}} + (i+j-1/2)\frac{\partial^{i+j}\,x}{\partial t_1^{i} \partial t_2^{j}} , \]
we directly see that $\varphi=0$ is the only possible formal power series solution. 
Let us consider for $\bold{d}=(1,1/2), \bold{l}=(0,-1/2)$ (and $\kappa(t_1,t_2)=(t_1t_2,t_2)$) the transformation $\sigma_{\bold{d},\bold{l}}(\bold{t},x)=((t_1t_2,t_2),t_2^{-1/2} \cdot x)$ such that
\[ 
\frac{\partial\,x}{\partial t_1}= \frac{1}{\sqrt{t_2}}\,\frac{\partial\,z}{\partial t_1}, \, \frac{\partial\,x}{\partial t_2} = \sqrt{t_2}\,\frac{\partial\,z}{\partial t_2}-\frac{t_1}{\sqrt{t_2}}\,\frac{\partial\,z}{\partial t_2} + \frac{1}{2\sqrt{t_2}}\,z 
\]
leads to the differential system generated by
\[ 
F_1^{(\bold{k},\bold{l})} = \frac{\partial\,z}{\partial t_2}, \, \, \, F_2^{(\bold{k},\bold{l})} = (1+t_1)\,\frac{\partial\,z}{\partial t_1}-t_2\,\frac{\partial\,z}{\partial t_2} -z/2. 
\]
For tropical solutions $A \in \B[\![t_1, t_2]\!]$ of $\trop_v(F_1^{(\bold{k},\bold{l})})$ we see that $A$ has to belong to $\B[\![t_1]\!]$. 
Similarly, by considering $\trop_v(F_2^{(\bold{k},\bold{l})})$ and all its derivatives with respect to $t_1$, a tropical solution for this system has to belong to $\B[\![t_1]\!]$. 
Indeed, the formal Puiseux series solution $\varphi = c \cdot \sqrt{t_1+t_2} = \sum_{i \ge 0} \binom{1/2}{i} \cdot t_1^it_2^{1/2-i}$ of $[F_1,F_2]$ transforms to the formal power series solution $\psi=\sum_{i \ge 0} \binom{1/2}{i} \cdot t_1^i$ of $[F_1^{(\bold{k},\bold{l})},F_2^{(\bold{k},\bold{l})}]$ which has exactly the prescribed support set, and
\[\trop_{\widetilde v}(\varphi) = \sigma_{\bold{d},\bold{l}}^{-1}(\trop_{\widetilde v}(\psi)) = \left\{ \begin{pmatrix} -1/2 \\ 1/2 \end{pmatrix}, \begin{pmatrix} 1/2 \\ -1/2 \end{pmatrix}, \ldots \right\}\]
is a solution of $\trop([F_1,F_2])$.
\end{ex}

\section{Can we generalize the fundamental theorem to series involving logarithms?}\label{sec-generalizationLogarithms}
Let us now consider series which might involve logarithms. 
For this purpose we will use (grid-based) transseries, which generalize formal Puiseux series (over $\R$). 
Informally speaking, transseries are formal Hahn series of real powers of the indeterminate $t$, exponentials, logarithms and their compositions, with real coefficients. 
It is required that the numbers of iterations of occurring exponentials and logarithms is finite and that the series are well-based. 
For a rigorous definition and details on transseries see~\cite{edgar2009transseries} or~\cite{van2006transseries}. 
A generalization to series with complex coefficients is presented in~\cite{van2001complex}. 
These series generalize formal Puiseux series over $\C$.

Let $(\mathcal{M}, \cdot)$ denote a totally ordered monomial group with powers in a real trigonometric field $K$ such as $\R$ (or $K$ to be the complexification of a real trigonometric function field such as $\C$). 
We will use $\mathcal{M}$ as the product and composition of the variables $\bold{t}$, logarithms and exponentials in $\bold{t}$. 
A grid-based transseries is then a series $\varphi = \sum_{m \in \mathcal{M}} \varphi_m \cdot m$ such that the support $\supp(\varphi)=\{m \in \mathcal{M} \mid \varphi_m \ne 0 \}$ is finitely generated in $\mathcal{M}$. We will denote as $K[\![\mathcal{M}]\!]$ the field of grid-based transseries over $\mathcal M$ with coefficients in $K$.

Let $d \co \mathcal M \rightarrow K[\![\mathcal{M}]\!]$ be a map satisfying the Leibniz rule and such that $d(\log(m)) = d(m)/m$ for all $m \in \mathcal M$. Then, by \cite[Theorem 5.1]{van2006transseries}, the map $d$ extends to an exp-log derivation on $K[\![\mathcal{M}]\!]$, thus making $(K[\![\mathcal{M}]\!],d)$ into a differential field.

Denoting with $\le$ the total order on $\mathcal M$, we can regard $\mathcal{M}\cup \{\infty\}$ as a semifield: we endow $\mathcal M$ with addition given as $\oplus := \min_{\le}$ and with multiplication given by the group operation of $\mathcal M$. These two operations are extended to $\mathcal{M}\cup \{\infty\}$ by $m \oplus \infty =m$ and $m \cdot \infty = \infty$ for all $m \in \mathcal M$. We equip $K[\![\mathcal{M}]\!]$ with a valuation $v \co K[\![\mathcal{M}]\!] \rightarrow \mathcal M$ in a tautological way by sending a power series $\varphi$ as above to the minimum of its support:
\[
    v(\varphi) := \min_\le \supp(\varphi).
\]
It is straightforward to check that the map $v$ is indeed a valuation.

Considering the idempotent semiring $\B[\![\mathcal{M}]\!]$ of Boolean transseries, we can make it into a differential semiring with a differential $d$ defined analogously as above. The map $\Phi \co (\B[\![\mathcal{M}]\!],d) \rightarrow \mathcal M$ sending a boolean transseries to the  minimum of its support in $\mathcal M$ is a tropical pair $\mathbf S$, in the language of \cite{gianmereta}. It is non-reduced (indeed its restriction to $\B(\!(t)\!)$ inside $\B[\![\mathcal{M}]\!]$ is the non-reduced tropical pair $\B(\!(t)\!) \rightarrow \Z \cup \{\infty\}$).  

Finally, we can define a map $\widetilde v \co K[\![\mathcal{M}]\!] \rightarrow \B[\![\mathcal{M}]\!]$ by coefficientwise application of the trivial valuation on the elements of $K[\![\mathcal{M}]\!]$, i.e. sending a transseries with coefficients in $K$ to its support. By definition of this map and of the differentials of its domain and codomain, it is clear that it commutes with the differentials. 
In conclusion, we built the following commutative diagram:
\[
	\begin{tikzcd}
		& \B[\![\mathcal{M}]\!]\arrow[d, "\Phi"]  \\
		 K[\![\mathcal{M}]\!] \arrow[ur, "\widetilde v"] \arrow[r, swap, "v"]& \mathcal M 
	\end{tikzcd}
\]
which satisfies all the hypothesis to be a differential enhancement of the valuation $v$, but the hypothesis of $\Phi$ being reduced. Let us denote as $\mathbf{v}=(v,\widetilde{v})\co (K[\![\mathcal{M}]\!],d) \to \mathbf{S}$ the differential enhancement above. Given a differential ideal $I \subset \diff{K[\![\mathcal{M}]\!]}{n}$, with the same proof as in \cite[Proposition 5.2.2]{gianmereta}, even if the pair $\mathbf S$ is non-reduced, we have that the following inclusion holds:
\[
\trop_{\widetilde{v}}(\textup{Sol}_{K[\![\mathcal{M}]\!]}(I)) \subseteq \textup{Sol}_\mathbf{S}(\trop_v(I)).
\]

Now, let us give an example in which the equality of the two sets above does not hold. Equivalently, the fundamental theorem does not hold for the differential enhancement $\mathbf v$ introduced above.
\begin{ex}\label{ex:fail-fund-theorem}
Consider the differential polynomial $F = tx' - x - t \in \C[\![t]\!]\{x\}$. The differential ideal generated by $F$ is prime. The solutions to the algebraic differential equation $F =0$, and thus to $[F]$, are of the form $\varphi = ct + t \log t \in \C[\![t,\log(t)]\!]$, where $c \in \C$. 
Thus, 
\[
    \trop_{\widetilde v}(\Sol(F)) = \{ t + t \log t,  t \log t \} .
\]
We can see the differential ring $\C[\![t,\log(t)]\!]$ as a differential ring $\C[\![\mathcal M]\!]$ for the totally ordered group generated by the symbols $t$ and $\log(t)$, with order given by $t \ge \log(t)$. The derivation sends $\log(t)$ to $t^{-1}$. As a semiring, $\mathcal M \cup \{\infty \}$ is isomorphic to $\Z^2 \cup \{\infty\} \subset \T_2$, thus we can write an element $t^n \log(t)^m \in \mathcal M$ as $(n,m)$ and we consider $\T_2$ to be the target of the valuation $v$.

Let us now compute the tropical differential polynomials $\trop(\{F\})$ and their solutions. 
From $\trop_v(dF) = (1,0)x'' + (0,0)$ we know that there is no solution of $\trop_v(\{F\})$ in $\B[\![t]\!]$. Since the fundamental theorem applies to solutions in $\C[\![t]\!]$, this is also telling us that there are no formal power series solutions of $[F]$.

Looking for solutions $A \in \B[\![\mathcal M]\!]$ to $ \trop_v(F) = (1,0)x' + (0,0)x + (1,0) $ we get that a solution is either of the form $A = t \log(t) + \dotsb$ or of the form $A = t  + t \log(t) + \dotsb$. 
Taking further derivatives of $F$, we obtain 
\[
\trop_v(d^k F) = (1,0)x^{(k+1)} + (0,0) x^{(k)}
\]
and evaluating $\trop_v(d^k F)$ in $A$, we obtain no further condition on the tropical solutions. We can now prove that the fundamental theorem does not hold in this context. More precisely, that given a classical solution $\varphi$, any boolean transseries of the form $A=\trop_{\widetilde v}(\varphi)+B$ with $B \in \B[\![\mathcal M]\!]$, $\Phi(B) > t \log t$ is in $\Sol(\trop_v([F]))$. 

Let us prove that $A:=\trop_{\widetilde v}(\varphi)+B$ is a solution for $\trop_v([F])$. Indeed, for every $k \in \N$, the following equality holds:
\[
    \Phi(d^k A) = \Phi(d^k \trop_{\widetilde v}(\varphi))
\]
and, as $\trop_{\widetilde v}(\varphi)$ is in $\Sol(\trop_v([F]))$, by definition of a tropical solution, every boolean transseries of the form $A=\trop_{\widetilde v}(\varphi)+B$ is in $\Sol(\trop_v([F]))$ as well.
\end{ex}

\begin{oss}
Notice that, ultimately, the fundamental theorem does not hold in this context (and analogously it does not hold when looking for solutions in $\B(\!(t)\!)$) as an essential feature of the derivation over power series is that 0 is a sink. Conversely, when looking at elements like $t^{-1}$ there is no $k \in \N$ such that $d^kt^{-1} = 0$. Even the non-reducedeness of the pair $\Phi$ does not play a role, as the same problem would arise by considering its reduction.
\end{oss}

\subsubsection*{Transformations}
Let us explain why no general transformation from transseries to formal power series, mapping the given system of algebraic differential equations to another system of algebraic differential equations fulfilling (almost all of) our assumptions in the introduction, exists. 
For this purpose, let us consider the simple case where transseries solutions $\varphi = \sum_{i \ge 0} c_i(\log(t))t^i \in K[\log(t)][\![t]\!]$ of $G \subset R_{1,n}$ are sought. 
Let us notice that we could also study $K[t][\![\exp(t)]\!]$ instead because every transseries involving only nested exponentials and logarithms can be written as a logarithm-free transseries by simply using the transformation $s(t)=\exp^{h}(t)$ where $\log^h(t)$ is the logarithmic depth.

First, $\sigma$ cannot be chosen to be rational because otherwise expressions such as $\varphi=\log(t)$ cannot be transformed into a formal power series. 
We might allow more general type of transformations as rational functions, but want to keep the other hypothesis (1),\,(2),\,(3) from the introduction. 
Then the natural choice for $\varphi=\log(t)$, namely $s(t)=\exp(t)$, is excluded by the condition $s(0)=0$. 
Also any other formal power series does not work, because for $s(t)=t \cdot s_0(t)$ with $s_0(t) \in K[\![t]\!]$, we obtain $\psi = \log(s(t)) = \log(t)+\log(s_0(t))$ which is not a formal power series since $\psi(0)$ is undefined. 
Dropping the assumption that $s(0)=0$ is problematic, because then $\varphi(s(t))$ might not be evaluated at $t=0$. 
We leave it as an open question to find $z(t,x)$ such that this issue is solved.

Second, when we use $s(t)=t$ and consider just transformations in $x$, for every constant $\varphi = c$ the evaluation $z(t,\varphi)$ has to be defined. 
If we allow more general transformations than rational functions, the natural choice  would be $z(t,x)=\exp(x/t^{k})$ for $k \in \Z$. 
In this way, $\varphi= t^k \log(t)$ would be mapped to a formal power series. 
Then, however, for $\varphi=\log(t)(t+1)$ we obtain that $z(t,\varphi) = t^{t+1}$ (for $k=0$) or $z(t,\varphi) = t^{t-1}$ (for $k=1$) which are not formal power series. 
We believe that there will not be any transformation $z$ mapping all transseries $K[\log(t)][\![t]\!]$ to formal power series.

\section{Other applications}
The transformations used for formal Laurent series and formal Puiseux series, respectively, can also be applied in other settings than that of tropical differential algebra. 
One other application, which relates to the fundamental theorem of tropical differential algebra, is given in approximation theory leading to the following result (cf.~\cite[Theorem 3.1]{denef1984power}).

\begin{teorema}\label{thm-DenefLipshitz}
Let $K \in \{ \C,\R,\Q_p \}$. Let $G$ be a finite differential system in $\Q[t]\{x_1,\ldots,x_n\}$ and let $l \in \Z$ and $ k \in \Z_{>0}$. 
Then there is an algorithm for deciding whether there is a solution $(\varphi_1,\ldots,\varphi_n) \in K(\!(t^{1/k})\!)^n$ of $G$ with $\ord(\varphi_i) \ge l$.
\end{teorema}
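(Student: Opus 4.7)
The plan is to reduce the existence problem to the original Denef--Lipshitz algorithm for formal power series solutions, \cite[Theorem~3.1]{denef1984power}, by composing with the Puiseux series transformation of Section~\ref{sec-Puiseux}.

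First, I would set $L \defeq \max(0,-lk) \in \Z_{\ge 0}$ and form the transformed differential system $G^{(k,L)}$, obtained from $G$ by substituting $t \mapsto t^k$, $x_j \mapsto t^{-L}z_j$ together with the induced rule for the derivatives, and then multiplying through to clear the negative powers of $t$. The key observation is that, since the generators of $G$ lie in $\Q[t]\{x_1,\ldots,x_n\}$ and the substitutions only introduce monomials with integer coefficients (including the factor $k \in \Z_{>0}$ coming from the chain rule), the resulting system $G^{(k,L)}$ again lies in $\Q[t]\{z_1,\ldots,z_n\}$, so the ambient coefficient ring is preserved.

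Next, I would invoke the one-to-one correspondence $(\varphi_1,\ldots,\varphi_n) \leftrightarrow (t^L\varphi_1(t^k),\ldots,t^L\varphi_n(t^k))$ used in the proof of Proposition~\ref{prop:solutionTransformation}, noting that its verification is purely formal and does not require $K$ to be algebraically closed or uncountable. For our choice of $L$, this establishes a bijection between solutions $(\varphi_1,\ldots,\varphi_n) \in K(\!(t^{1/k})\!)^n$ of $G$ with $\ord(\varphi_i) \ge l$ and formal power series solutions $(\psi_1,\ldots,\psi_n) \in K[\![t]\!]^n$ of $G^{(k,L)}$. Applying the Denef--Lipshitz algorithm to $G^{(k,L)}$, which for $K \in \{\C,\R,\Q_p\}$ decides existence of a formal power series solution in $K[\![t]\!]^n$ of a finite differential system with coefficients in $\Q[t]$, then decides the original question by the bijection.

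The main obstacle I anticipate is a bookkeeping check: one must verify that after carrying out the substitutions for the higher-order derivatives and multiplying through, one genuinely obtains a system in $\Q[t]\{z_1,\ldots,z_n\}$ rather than in $\Q[t,t^{-1}]\{z_1,\ldots,z_n\}$, and that solutions of $G$ whose ramification index is a proper divisor of $k$ (and therefore already lie in $K(\!(t^{1/k})\!)$) are not lost under the correspondence. Both points are straightforward but need to be spelled out, after which the theorem follows by combining the Puiseux series transformation with the Denef--Lipshitz algorithm.
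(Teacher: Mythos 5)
Your proposal follows the same route as the paper's own proof: pass to the transformed system via the Puiseux substitution of Section~\ref{sec-Puiseux}, use the one-to-one correspondence between Puiseux series solutions of $G$ and formal power series solutions of the transformed system (which indeed requires neither algebraic closedness nor uncountability of $K$), and then apply the Denef--Lipshitz algorithm \cite[Theorem 3.1]{denef1984power} to the transformed system, whose coefficients remain in $\Q[t]$ after clearing denominators.

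One correction: your choice $L=\max(0,-lk)$ does not encode the order condition when $l>0$. In that case $L=0$, and formal power series solutions of $G^{(k,0)}$ correspond to Puiseux solutions with $\ord(\varphi_i)\ge 0$, which is weaker than $\ord(\varphi_i)\ge l$; the claimed bijection fails and the algorithm would answer a different question. The fix is either to take the shift $-lk$ regardless of sign (a negative shift is harmless, since negative powers of $t$ are cleared when passing to numerators, and $t^{-lk}\varphi_i(t^k)\in K[\![t]\!]$ holds precisely when $\ord(\varphi_i)\ge l$), or to keep $L=0$ and additionally impose the vanishing of the first $lk$ Taylor coefficients of each $\psi_i$, which \cite[Theorem 3.1]{denef1984power} permits, as noted in the footnote to the example following the theorem.
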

\begin{proof}
Let $G^{(k,l)} \in K[t]\{z_1,\ldots,z_n\}$ be the system obtained after the transformation as in Section~\ref{sec-Puiseux}. 
Solutions $(\varphi_1,\ldots,\varphi_n) \in K(\!(t^{1/k})\!)^n$ of $G$ are in one-to-one correspondence to solutions $(\psi_1,\ldots,\psi_n) \in K[\![t]\!]^n$ of $G^{(k,l)}$. 
By~\cite[Theorem 3.1]{denef1984power}, there is an algorithm for deciding the solvability of $G^{(k,l)}$ in $K[\![t]\!]$.
\end{proof}

\begin{ex}
Let us consider $F=xx'-1$ together with the initial condition $x(0)=0$.\footnote{In Theorem~\ref{thm-DenefLipshitz}, one can additionally impose a finite number of equations and inequations for the Taylor coefficients of solutions as it can also be seen in the proof of~\cite[Theorem 3.1]{denef1984power}.}
The solutions are $\varphi = \pm \sqrt{2t} \in \Q[\![t^{1/2}]\!]$, which are not formal power series. 
Let us consider $F^{(2,1)}=zz'-2t$ and $z(0)=0$. 
We see that $\psi = \pm \sqrt{2} \cdot t \in \Q[\![t]\!]$. 
This solution is easily found by~\cite[Theorem 3.1]{denef1984power}.
\end{ex}

Other results on formal power series solutions might be generalized in a similar way. 
For example, let $G$ be a given differential system and let $G^{(k,l)}$ be the transformed system as in Section~\ref{sec-Puiseux}. 
Then results on jet spaces (see e.g.~\cite{moosa2010jet}), constructed from $G^{(k,l)}$ and in one-to-one correspondence to its formal power series solutions~\cite[Remark 2.3.13]{seiler2010involution}), can be applied. 
In this way, a rigorous approximative description of the formal Puiseux series solutions of $G$ can be given. 
Moreover, following~\cite[Section 9]{seiler2010involution}, sufficiently often differentiable solutions, whose asymptotic behaviour are described by a Puiseux polynomial (they are called \textit{geometric solutions} in the reference), can be studied in this way allowing a much bigger class of solutions under investigation.


\subsection*{Acknowledgments}
First author is partially supported by the grant PID2020-113192GB-I00 (Mathematical Visualization: Foundations, Algorithms and Applications) from the Spanish MICINN and the OeAD project FR 09/2022.


\bibliographystyle{alpha}

\begin{thebibliography}{FGLH{\etalchar{+}}20}

\bibitem[ACJ03]{aroca2003power}
Fuensanta Aroca, Josè Cano, and Fran{\c{c}}oise Jung.
\newblock Power series solutions for non-linear pde's.
\newblock In {\em Proceedings of the 2003 international symposium on Symbolic
  and algebraic computation}, pages 15--22, 2003.

\bibitem[AGT16]{AGT}
Fuensanta Aroca, Cristhian Garay, and Zeinab Toghani.
\newblock The fundamental theorem of tropical differential algebraic geometry.
\newblock {\em Pacific Journal of Mathematics}, 283(2):257--270, 2016.

\bibitem[BFNS21]{boulier2021relationship}
Fran{\c{c}}ois Boulier, Sebastian Falkensteiner, Marc~Paul Noordman, and
  Omar~Leon Sanchez.
\newblock On the relationship between differential algebra and tropical
  differential algebraic geometry.
\newblock In {\em Computer Algebra in Scientific Computing: 23rd International
  Workshop, CASC 2021, Sochi, Russia, September 13--17, 2021, Proceedings 23},
  pages 62--77. Springer, 2021.

\bibitem[BH19]{boulier2019ritt}
Fran{\c c}ois Boulier and Mercedes Haiech.
\newblock {The Ritt-Raudenbush Theorem and Tropical Differential Geometry}.
\newblock working paper or preprint, 2019.

\bibitem[DL84]{denef1984power}
Jan Denef and Leonard Lipshitz.
\newblock Power series solutions of algebraic differential equations.
\newblock {\em Mathematische annalen}, 267:213--238, 1984.

\bibitem[Edg09]{edgar2009transseries}
Gerald~A. Edgar.
\newblock {Transseries for Beginners}.
\newblock {\em Real Analysis Exchange}, 35(2):253 -- 310, 2009.

\bibitem[FGLH{\etalchar{+}}20]{falkfund}
Sebastian Falkensteiner, Cristhian Garay-L{\'o}pez, Mercedes Haiech, Marc~Paul
  Noordman, Zeinab Toghani, and Fran{\c{c}}ois Boulier.
\newblock The fundamental theorem of tropical partial differential algebraic
  geometry.
\newblock In {\em Proceedings of the 45th International Symposium on Symbolic
  and Algebraic Computation}, pages 178--185, 2020.

\bibitem[FMSW23]{falkensteiner2023algebro}
Sebastian Falkensteiner, Johann Mitteramskogler, J~Sendra, and Franz Winkler.
\newblock The algebro-geometric method: Solving algebraic differential
  equations by parametrizations.
\newblock {\em Bulletin of the American Mathematical Society}, 60(1):85--122,
  2023.

\bibitem[GG16]{eqtrop}
Jeffrey Giansiracusa and Noah Giansiracusa.
\newblock Equations of tropical varieties.
\newblock {\em Duke Math. J.}, 165(18):3379--3433, 2016.

\bibitem[GM21]{gianmereta}
Jeffrey Giansiracusa and Stefano Mereta.
\newblock A general framework for tropical differential equations.
\newblock {\em arXiv preprint arXiv:2111.03925}, 2021.

\bibitem[Gri17]{grig}
Dima Grigoriev.
\newblock Tropical differential equations.
\newblock {\em Advances in Applied Mathematics}, 82:120--128, 2017.

\bibitem[Kol73]{kolchin1973differential}
Ellis~Robert Kolchin.
\newblock {\em Differential algebra \& algebraic groups}.
\newblock Academic press, 1973.

\bibitem[LH14]{lange2014counting}
Markus Lange-Hegermann.
\newblock {\em Counting solutions of differential equations}.
\newblock PhD thesis, Aachen, Techn. Hochsch., Diss., 2014.

\bibitem[Mer22]{tesi}
Stefano Mereta.
\newblock A general framework for tropical differential equations.
\newblock PhD thesis, Swansea University and Université Grenoble Alpes, 2022.

\bibitem[Mer23]{merfund}
Stefano Mereta.
\newblock The fundamental theorem of tropical differential algebra over
  nontrivially valued fields and the radius of convergence of nonarchimedean
  differential equations.
\newblock {\em arXiv preprint arXiv:2303.12124}, 2023.

\bibitem[MR18]{tropideal}
Diane Maclagan and Felipe Rinc{\'o}n.
\newblock Tropical ideals.
\newblock {\em Compositio Mathematica}, 154(3):640--670, 2018.

\bibitem[MS10]{moosa2010jet}
Rahim Moosa and Thomas Scanlon.
\newblock Jet and prolongation spaces.
\newblock {\em Journal of the Institute of Mathematics of Jussieu},
  9(2):391--430, 2010.

\bibitem[MS21]{macsturm}
Diane Maclagan and Bernd Sturmfels.
\newblock {\em Introduction to tropical geometry}, volume 161.
\newblock American Mathematical Society, 2021.

\bibitem[Rob14]{robertz2014formal}
Daniel Robertz.
\newblock {\em Formal algorithmic elimination for PDEs}, volume 2121.
\newblock Springer, 2014.

\bibitem[Sei10]{seiler2010involution}
Werner~M. Seiler.
\newblock {\em Involution}, volume~24.
\newblock Springer, 2010.

\bibitem[vdH01]{van2001complex}
Joris van~der Hoeven.
\newblock {\em Complex transseries solutions to algebraic differential
  equations}.
\newblock Universit{\'e} de Paris-Sud. D{\'e}partement de Math{\'e}matique,
  2001.

\bibitem[VdH06]{van2006transseries}
Joris Van~der Hoeven.
\newblock {\em Transseries and real differential algebra}, volume 1888.
\newblock Springer, 2006.

\end{thebibliography}

\newcommand{\etalchar}[1]{$^{#1}$}

\end{document}